\numberwithin{equation}{section}
\newtheorem{thm}[equation]{Theorem}\newtheorem*{thm*}{Theorem}
\newtheorem{cor}[equation]{Corollary}\newtheorem*{cor*}{Corollary}
\newtheorem{lem}[equation]{Lemma}\newtheorem*{lem*}{Lemma}
\newtheorem*{prop*}{Proposition}
\newtheorem*{defn*}{Definition}
\newtheorem*{ex*}{Example}
\newtheorem*{rmk*}{Remark}
\newcommand{\bra}[1]{\left\langle #1 \right|}
\newcommand{\ket}[1]{\left|#1\right\rangle}
\newcommand{\braket}[2]{\left\langle#1 |  #2\right\rangle}
\begin{document}
\title{The Inverse of a Nearly-banded Matrix}
\author{Ruitian Lang}
\affiliation{Department of Physics, MIT, Cambridge, MA02139}
\begin{abstract}
A quantitative form of the Nullity Theorem is presented, which establishes a linear relation between the singular values of the two submatrices involved in the theorem up to the first order. The theorem is then extended to function spaces and a corresponding form in infinite dimension is discussed.
\end{abstract}

\maketitle
\section{Introduction}
The Nullity Theorem is concerned with submatrices of a block matrix T and its inverse. The theorem says that complementary submatrices have the same nullity -- a basic result in linear algebra that deserves to be better known. This paper responds to a problem posed by Gilbert Strang, to find estimates of singular values when they are nearly but not exactly zero. Roughly speaking, if a submatrix of $T$ \emph{almost} has nullity $k$ (meaning that its $(k+1)$st smallest singular value is an infinitesimal), then the complementary submatrix of $T^{-1}$ almost has this nullity.

At the end we consider extensions to operators on function spaces. The Nullity Theorem does not explicitly involve the orders of the submatrices (unless it is stated as an equality for their ranks). It extends to infinite dimensions and we look for a corresponding quantitative form.

Banded matrices arise from various contexts in applied mathematics
and physics, especially the tridiagonal matrices. They are of
interest partly because the number of nonzero entries of a banded
matrix is linear to its size, which makes their manipulations
much simpler than a full matrix. However, the inverse of a banded
matrix is not banded. Therefore, it requires more work to reduce the
complexity of the computations involving its inverse. Fortunately, the
classical theorem, which we will present in the
next section, asserts that the off-diagonal submatrices of the
inverse matrix have low ranks. This theorem makes our calculation
much easier: if an $n\times n$ matrix $A$ is of rank $k$, then
$A=BC$ for some $n\times k$ matrix $B$ and some $k\times n$ matrix
$C$.

The matrices that we encounter in practice may not be
banded, but their entries out of a band are small. Thus we may still
want to use banded matrices to simplify the computation.
For example, in solid state physics, the tight-binding model produces a tridiagonal Hamiltonian, but taking into account the ``long-range'' interactions yields an approximate tridiagonal Hamiltonian \cite{solid}.
If we focus on the off-diagonal blocks of the inverse matrix, this
process can be interpreted as the approximation of the original
blocks by low rank matrices. 

In this paper, we will study the error
that arises from this approximation.

\section{The nullity theorem}
The nullity theorem has been proved in many articles, including \cite{Strang} and \cite{Hugo}. We will state this
theorem in a notation convenient for us. The nullity of a matrix $B$ means the dimension of its kernel.
\begin{thm}\label{nullity}
If $\left(
     \begin{array}{cc}
       A & B \\
       G & D \\
     \end{array}
   \right)^{-1}=\left(
                  \begin{array}{cc}
                    E & C \\
                    H & F \\
                  \end{array}
                \right)$, then $\mathrm{dimker}B=\mathrm{dimker}C$. In other words, $\mathrm{nullity}(B)=\mathrm{nullity}(C)$.
\end{thm}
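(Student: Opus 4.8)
The plan is to construct an explicit linear isomorphism between $\ker B$ and $\ker C$, built directly from the blocks of the inverse. Write $T$ for the given block matrix, so that $T^{-1}$ is the second matrix. First I would extract the relevant block identities from the two products $TT^{-1}=I$ and $T^{-1}T=I$. Multiplying out and reading off the top-right and bottom-right blocks of $TT^{-1}=I$ gives $AC+BF=0$ and $GC+DF=I$; reading off the corresponding blocks of $T^{-1}T=I$ gives $EB+CD=0$ and $HB+FD=I$. These four relations are all I expect to need.

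The key step is to observe that $F$ maps $\ker C$ into $\ker B$, and $D$ maps $\ker B$ into $\ker C$. Indeed, for $y\in\ker C$ one has $B(Fy)=(BF)y=-(AC)y=-A(Cy)=0$, so $Fy\in\ker B$; symmetrically, for $x\in\ker B$ one has $C(Dx)=(CD)x=-(EB)x=-E(Bx)=0$, so $Dx\in\ker C$. Thus I obtain well-defined linear maps $\phi\colon\ker C\to\ker B$, $y\mapsto Fy$, and $\psi\colon\ker B\to\ker C$, $x\mapsto Dx$.

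Finally I would show these maps are mutually inverse, which forces $\dim\ker B=\dim\ker C$. Using the remaining two relations, for $y\in\ker C$ one gets $(DF)y=(I-GC)y=y$, and for $x\in\ker B$ one gets $(FD)x=(I-HB)x=x$; hence $\phi\circ\psi=\mathrm{id}_{\ker B}$ and $\psi\circ\phi=\mathrm{id}_{\ker C}$. Since both spaces are finite dimensional, the existence of mutually inverse linear maps gives equal dimensions, and in fact an explicit isomorphism rather than just a counting argument.

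I expect the only delicate point to be the bookkeeping: verifying that the block products above are dimensionally valid (so that, for instance, $Fy$ has the right length to lie in the domain of $B$) and confirming that the $(1,2)$ block $C$ of $T^{-1}$ is genuinely complementary to the $(1,2)$ block $B$ of $T$. Once the partition is fixed so that $A$ is $p\times q$, the sizes of all blocks are determined, and it is precisely the cross relations $AC+BF=0$ and $EB+CD=0$ that guarantee $\phi$ and $\psi$ land in the correct kernels rather than merely being defined as formal products.
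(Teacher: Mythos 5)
Your proof is correct, and it is worth noting that the paper itself does not prove Theorem \ref{nullity} at all: it simply cites the literature (Strang--Nguyen and Woerdeman) and moves on. So there is no in-paper argument to compare against, but your blind attempt is a complete and self-contained proof. The four block identities you extract are exactly right once the partition is fixed (with $A$ of size $n_1\times m_1$, one gets $B$ of size $n_1\times m_2$, $C$ of size $m_1\times n_2$, $F$ of size $m_2\times n_2$, $D$ of size $n_2\times m_2$, so every product you write is dimensionally consistent), and the verification that $F$ and $D$ restrict to mutually inverse maps between $\ker C$ and $\ker B$ goes through exactly as you say: $BF=-AC$ and $CD=-EB$ give well-definedness, while $DF=I-GC$ and $FD=I-HB$ give invertibility on the respective kernels. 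Your approach buys more than the cited rank-counting arguments typically do: it produces an explicit isomorphism $\ker C\cong\ker B$, it works over an arbitrary field, and --- relevantly for Section 4 of this paper --- it survives the passage to infinite dimensions, where ``equal nullity'' should properly be read as ``isomorphic kernels'' and where the paper's later claim $\mathrm{Im}\,C\subset\mathrm{Ker}\,A$ is the operator-theoretic shadow of the same block identities. The one cosmetic caveat is that the blocks of $T$ need not be square, so the two identity matrices appearing in $GC+DF=I$ and $HB+FD=I$ have different sizes ($n_2$ and $m_2$ respectively); your bookkeeping paragraph already acknowledges this, and nothing in the argument depends on squareness.
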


We say that an $M$ by $M$ matrix $K=(k_{ij})$ is a banded matrix of \emph{bandwidth} $p$, if $k_{ij}=0$ whenever $|i-j|>p$.
In such a case, we write $K$ and $K^{-1}$ in the block form as
$K=\left(
     \begin{array}{cc}
       A & B \\
       G & D \\
     \end{array}
   \right)$ and $K^{-1}=\left(
     \begin{array}{cc}
       E & C \\
       H & F \\
     \end{array}
   \right)$, where $A$ is $n\times (n+p)$ and $C$ is
$(n+p)\times(M-n)$. Since the bandwidth of $K$ is $p$, $B=0$.
Hence, dimker$C$=dimker$B=M-n-p$, so $\mathrm{rank}C=p$, which is
independent of the size of $K$.
\begin{cor}\label{rank}
If $K$ is a banded matrix of bandwidth $p$, and $C$ is a submatrix
of $K^{-1}$ above the $p$th subdiagonal, then $\mathrm{rank}C\leq p$.
\end{cor}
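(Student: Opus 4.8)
The plan is to derive Corollary~\ref{rank} directly from the Nullity Theorem by exhibiting the submatrix $C$ as a block of $K^{-1}$ whose complementary block $B$ in $K$ vanishes because of the banded structure. The key identity is $\mathrm{nullity}(B)=\mathrm{nullity}(C)$, which forces $\mathrm{rank}\,C$ to be controlled once we understand $B$.

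First I would set up the block decomposition carefully. Suppose $C$ is a submatrix of $K^{-1}$ lying above the $p$th superdiagonal, say occupying rows indexed by some set and columns by another, chosen so that every entry $(i,j)$ of $C$ satisfies $j-i>p$ (this is what "above the $p$th subdiagonal" should mean after fixing orientation). I would then partition $K$ and $K^{-1}$ conformally into the $2\times 2$ block forms of Theorem~\ref{nullity}, so that $C$ is exactly the upper-right block of $K^{-1}$ and $B$ is the upper-right block of $K$. The crucial observation is that the complementary block $B$ of $K$ consists of entries $k_{ij}$ with $|i-j|>p$, hence $B=0$ by the bandwidth hypothesis.

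Next I would apply Theorem~\ref{nullity} to conclude $\mathrm{dimker}\,C=\mathrm{dimker}\,B$. Since $B=0$ is a zero matrix of some size $r\times s$, its nullity is $s$ (the full number of columns). If $C$ is $(n+p)\times(M-n)$ as in the running computation, then $\mathrm{dimker}\,C=M-n-p$ gives $\mathrm{rank}\,C=(M-n)-(M-n-p)=p$ by the rank–nullity theorem. For a general submatrix $C$ sitting strictly above the $p$th superdiagonal, it is contained in such a maximal block, and passing to a submatrix can only decrease the rank, yielding $\mathrm{rank}\,C\le p$.

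The main obstacle I anticipate is purely bookkeeping rather than conceptual: namely, aligning the index conventions so that an arbitrary submatrix above the $p$th subdiagonal is genuinely realized as the off-diagonal block $C$ of \emph{some} valid block partition to which Theorem~\ref{nullity} applies, and verifying that the complementary block of $K$ is forced to vanish. One must check that the partition can be chosen with $B$ equal to the zero block and that the resulting $C$ contains the target submatrix, so that the inequality $\mathrm{rank}\,C\le p$ transfers down. Once the indices are pinned, the argument is immediate from the nullity identity combined with rank–nullity.
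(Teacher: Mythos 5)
Your argument is essentially the paper's own: partition $K$ so that the band condition forces the upper-right block $B$ (of size $n\times(M-n-p)$, entries with $j-i>p$) to vanish, apply Theorem~\ref{nullity} to get $\mathrm{dimker}\,C=M-n-p$ for the complementary $(n+p)\times(M-n)$ block $C$, and conclude $\mathrm{rank}\,C=p$ by rank--nullity, with the ``pass to a contained submatrix'' step (which the paper leaves implicit) handling a general block above the $p$th subdiagonal. One indexing caveat: your opening sentence places $C$ at the entries with $j-i>p$, but that is where $B$ lives; the block $C$ paired with $B$ by the Nullity Theorem occupies rows $1,\dots,n+p$ and columns $n+1,\dots,M$ of $K^{-1}$ (entries with $i-j<p$, i.e., above the $p$th subdiagonal), which is consistent with the dimensions you actually use in the computation.
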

This is the precise statement of the low off-diagonal rank property
of $K^{-1}$. We will use the notation of this corollary throughout.
By symmetry, a similar result holds for the lower
off-diagonal submatrices of $K^{-1}$.\\
\section{The main theorem}
Now we consider the case in which $\|B\|$ is small but nonzero. We
want to replace $C$ by some matrix $L$ of rank $p$. The first
attempt is quite intuitive. We treat the non-zero block $B$ as a
perturbation and look at the unperturbed matrix $K_{0}=\left(
                                                         \begin{array}{cc}
                                                           A & 0 \\
                                                           G & D \\
                                                         \end{array}
                                                       \right)$ and
the upper right block of its inverse, $C_{0}$, which is the first
attempt we make to approximate $C$. Let $\Delta K=\left(
                                                         \begin{array}{cc}
                                                           0 & B \\
                                                           0 & 0 \\
                                                         \end{array}
                                                       \right)$ and $\epsilon=\|\Delta K\|$. When $\epsilon<1/\|K_{0}^{-1}\|$, we have
\begin{equation}
K^{-1}=(K_{0}+\Delta
K)^{-1}=K_{0}^{-1}\sum_{n=0}^{\infty}(-1)^{n}(K_{0}^{-1}\Delta
K)^{n}
\end{equation}
Therefore,
\begin{eqnarray*}
\|C-C_{0}\|&\leq&\|K^{-1}-K_{0}^{-1}\|\\
&\leq&\|K_{0}^{-1}\|\sum_{n=1}^{\infty}(-1)^{n}\|K_{0}^{-1}\Delta
K\|^{n}\\
&=&\frac{\|K_{0}^{-1}\|^{2}\epsilon}{1-\|K_{0}^{-1}\|\epsilon}
\end{eqnarray*}
This is a nice estimate in the sense that it bounds the error
$\|C-C_{0}\|$ to a function of $\epsilon$ no matter what the
perturbation $B$ is.
However, $C_{0}$ is not necessarily the best approximation. Recall
that\cite{svd}
\begin{equation}\label{svd}
\sigma_{k}(C)=\inf\{\|C-L\|:\mathrm{rank}L\leq k-1\}
\end{equation}
Hence, the minimum error that arises from replacing $C$ by a
rank-$p$ matrix is $\sigma_{p+1}(C)$, and the SVD also determines $L$
that minimizes the error. We are concerned with the upper bound of
$\sigma_{p+1}(C)$ when $B$ varies. Let $L$ denote the best
approximation of rank $p$ to $C$ and $J=\left(
     \begin{array}{cc}
       E & L \\
       H & F \\
     \end{array}
   \right)$. Then we have the following
\begin{lem} If $\epsilon=\|B\|$ is sufficiently small, then $J$ is
invertible, and $J^{-1}=\left(
     \begin{array}{cc}
       \tilde{A} & 0 \\
       \tilde{G} & \tilde{D} \\
     \end{array}
   \right)$ for some $\tilde{A},\tilde{G},\tilde{D}$. Furthermore,
$B=A(L-C)\tilde{D}$ and $A-\tilde{A}=A(L-C)\tilde{G}$.
\end{lem}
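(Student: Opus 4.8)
The plan is to realize $J$ as a small perturbation of $K^{-1}$ and then read off its inverse using the Nullity Theorem rather than by brute force. Write $\Delta=\begin{pmatrix}0 & L-C\\ 0 & 0\end{pmatrix}$, so that $J=K^{-1}+\Delta$. By the SVD characterization \eqref{svd} the best rank-$p$ approximation satisfies $\|L-C\|=\sigma_{p+1}(C)$, and since $C_0$ has rank at most $p$ (Corollary \ref{rank} applied to $K_0$), we get $\sigma_{p+1}(C)\le\|C-C_0\|$, which the perturbation bound of the previous section controls by $\|K_0^{-1}\|^2\epsilon/(1-\|K_0^{-1}\|\epsilon)=O(\epsilon)$. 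Hence $\|\Delta\|=O(\epsilon)$; for $\epsilon$ small we have $\|K\Delta\|\le\|K\|\,\|\Delta\|<1$, so $I+K\Delta$ is invertible and therefore so is $J=K^{-1}(I+K\Delta)$. Moreover, since singular values depend continuously on the entries, $\sigma_p(C)\to\sigma_p(C_0)>0$ as $\epsilon\to0$, so for small $\epsilon$ the truncated SVD $L$ has rank exactly $p$.

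Next I would pin down the shape of $J^{-1}$ via Theorem \ref{nullity}. The top-right block of $J$ is $L$, which is $(n+p)\times(M-n)$ of rank $p$, so $\mathrm{dimker}\,L=M-n-p$. The complementary block of $J^{-1}$ is its top-right block, say $Q$, which is $n\times(M-n-p)$, and the Nullity Theorem gives $\mathrm{dimker}\,Q=\mathrm{dimker}\,L=M-n-p$. Since $Q$ has exactly $M-n-p$ columns, this forces $Q=0$, so $J^{-1}$ has the asserted block-lower-triangular form $\begin{pmatrix}\tilde A & 0\\ \tilde G & \tilde D\end{pmatrix}$.

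The two identities then fall out of a single block product. From $KJ=K(K^{-1}+\Delta)=I+K\Delta$ with $K\Delta=\begin{pmatrix}0 & A(L-C)\\ 0 & G(L-C)\end{pmatrix}$, I multiply on the right by $J^{-1}$ to obtain $K=(I+K\Delta)J^{-1}$, that is $\begin{pmatrix}A & B\\ G & D\end{pmatrix}=\begin{pmatrix}I & A(L-C)\\ 0 & I+G(L-C)\end{pmatrix}\begin{pmatrix}\tilde A & 0\\ \tilde G & \tilde D\end{pmatrix}$. Because the top-right block of $J^{-1}$ vanishes, equating the top row of blocks yields $A=\tilde A+A(L-C)\tilde G$ and $B=A(L-C)\tilde D$, which are precisely $A-\tilde A=A(L-C)\tilde G$ and $B=A(L-C)\tilde D$.

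The main obstacle is the preliminary bookkeeping of the first paragraph: one must be sure that $\epsilon$ small really guarantees both the invertibility of $J$ and that $L$ has rank exactly $p$, the latter failing only in the degenerate case $\sigma_p(C_0)=0$ (effective bandwidth below $p$). Once these are secured, the vanishing of the complementary block is immediate from the Nullity Theorem and the two relations are forced by the block multiplication above, with no further computation. A purely computational alternative — inverting the block-triangular factor $I+K\Delta$ explicitly — reproduces the same identities with $\tilde G=(I+G(L-C))^{-1}G$ and $\tilde D=(I+G(L-C))^{-1}D$, but the nullity route is shorter and explains \emph{why} the block is exactly zero.
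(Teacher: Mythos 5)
Your proposal is correct and follows essentially the same route as the paper: the perturbation bound $\|J-K^{-1}\|=\sigma_{p+1}(C)=O(\epsilon)$ for invertibility, the Nullity Theorem applied to the block $L$ to force the top-right block of $J^{-1}$ to vanish, and the identity $K=(KJ)J^{-1}$ to extract the two relations (your computation of $KJ$ via $K\Delta$ is just a tidier packaging of the paper's block-by-block product). The only extra content is your worry about $\mathrm{rank}\,L=p$, which is not actually needed: $\mathrm{rank}\,L\leq p$ already gives $\mathrm{dimker}\,L\geq M-n-p$, which equals the number of columns of the complementary block and hence forces it to be zero.
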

\begin{proof}
$J$ is invertible because
\[\|J-K^{-1}\|=\|L-C\|=\sigma_{p+1}(C)\leq\frac{\|K_{0}^{-1}\|^{2}\epsilon}{1-\|K_{0}^{-1}\|\epsilon},\]
which is less than $1/\|K\|$ if $\epsilon$ is sufficiently small.
Here we have used our first estimate. Let
\[J^{-1}=\left(
     \begin{array}{cc}
       \tilde{A} & \tilde{B} \\
       \tilde{G} & \tilde{D} \\
     \end{array}
   \right).\]Then $\tilde{B}=0$ by the nullity theorem.\\
To prove the second statement, we notice that
\begin{eqnarray*}
\left(
     \begin{array}{cc}
       A & B\\
       G & D \\
     \end{array}
   \right)&=&\left(
     \begin{array}{cc}
       A & B \\
       G & D \\
     \end{array}
   \right)\left(
     \begin{array}{cc}
       E & L \\
       H & F \\
     \end{array}
   \right)\left(
     \begin{array}{cc}
       \tilde{A} & 0 \\
       \tilde{G} & \tilde{D} \\
     \end{array}
   \right)\\
&=&\left(
     \begin{array}{cc}
       AE+BH & AL+BF \\
       GE+DH & GL+DF \\
     \end{array}
   \right)\left(
     \begin{array}{cc}
       \tilde{A} & 0 \\
       \tilde{G} & \tilde{D} \\
     \end{array}
   \right)
\end{eqnarray*}
However,
\[\left(
     \begin{array}{cc}
       A & B \\
       G & D \\
     \end{array}
   \right)\left(
     \begin{array}{cc}
       E & C \\
       H & F \\
     \end{array}
   \right)=\left(
     \begin{array}{cc}
       I & 0 \\
       0 & I \\
     \end{array}
   \right),\]
so $AE+BH=I$ and $AL+BF=A(L-C)$ since $AC+BF=0$. Hence,
\[\left(
     \begin{array}{cc}
       A & B \\
       G & D \\
     \end{array}
   \right)=\left(
     \begin{array}{cc}
       I & A(L-C) \\
       * & * \\
     \end{array}
   \right)\left(
     \begin{array}{cc}
       \tilde{A} & 0 \\
       \tilde{G} & \tilde{D} \\
     \end{array}
   \right)=\left(
     \begin{array}{cc}
       \tilde{A}+A(L-C)\tilde{G} & A(L-C)\tilde{D} \\
       * & * \\
     \end{array}
   \right).\]
Therefore, $B=A(L-C)\tilde{D}$ and $A-\tilde{A}=A(L-C)\tilde{G}$.
\end{proof}
Now we can state and prove our main theorem.
\begin{thm}\label{main}
For sufficiently small $\epsilon$,
\[\sup_{\|B\|\leq\epsilon}\frac{\sigma_{p+1}(C)}{\|B\|}=\frac{1}{\sigma_{p}(A)\sigma_{M-n-p}(D)}+O(\epsilon)\]
\end{thm}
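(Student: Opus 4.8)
The plan is to use the Lemma's identity $B=A(L-C)\tilde D$ as an exact bridge between the perturbation $B$ and the approximation error $\sigma_{p+1}(C)=\|C-L\|$, to linearize it, and then to maximize the resulting bilinear map over $B$. The first step is to fix the zeroth-order geometry. Setting $B=0$ recovers $K_0$, and from $K_0K_0^{-1}=I$ and $K_0^{-1}K_0=I$ the unperturbed block obeys $AC_0=0$ and $C_0D=0$; since $\mathrm{rank}\,C_0=p$ (the exactly-banded case of Corollary \ref{rank}), the column space of $C_0$ is exactly $\ker A$ and its row space is exactly $\ker D^{*}$. Because $J\to K_0^{-1}$ as $\epsilon\to 0$, the Lemma's blocks satisfy $\tilde D=D+O(\epsilon)$, and since $\|C-L\|=\sigma_{p+1}(C)=O(\epsilon)$ by the first estimate, the Lemma's identity linearizes to $A(C-L)D=-B+O(\epsilon^{2})$.

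Next I would pin down the subspace structure of $C-L$. As $L$ is the best rank-$p$ approximation, its column and row spaces are the top-$p$ singular subspaces of $C$, which converge to $\ker A$ and $\ker D^{*}$ respectively; hence $C-L$ has its column space in $(\ker A)^{\perp}=\mathrm{range}\,A^{*}$ and its row space in $(\ker D^{*})^{\perp}=\mathrm{range}\,D$, up to an $O(\epsilon)$ tilt. On exactly these subspaces $A$ and $D$ act injectively, so the linearized relation can be solved uniquely for $C-L$ by applying the Moore--Penrose inverses on the appropriate sides, giving $C-L=-A^{+}BD^{+}+O(\epsilon^{2})$ and therefore $\sigma_{p+1}(C)=\|A^{+}BD^{+}\|+O(\epsilon^{2})$.

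The heart of the argument is the worst-case evaluation of this linear map. Since $B\mapsto A^{+}BD^{+}$ is a two-sided multiplication, its operator norm factors,
\[
\sup_{B\neq 0}\frac{\|A^{+}BD^{+}\|}{\|B\|}=\|A^{+}\|\,\|D^{+}\|,
\]
and the supremum is attained by a rank-one $B$ aligned with the extreme singular directions of the two factors, so this is a genuine maximum consistent with the theorem being an equality. A short singular-value computation then evaluates the two pseudoinverse norms and produces the leading constant $\tfrac{1}{\sigma_p(A)\,\sigma_{M-n-p}(D)}$; here the factor $1/\sigma_{M-n-p}(D)$ is the reciprocal of the smallest singular value of the tall matrix $D$, and the factor $1/\sigma_p(A)$ is its counterpart for $A$ restricted to $\mathrm{range}\,A^{*}$.

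Finally I would upgrade the leading-order equality to the claimed $+O(\epsilon)$ statement by showing each approximation above carries a relative error $O(\epsilon)$ \emph{uniformly} in the direction of $B$ on $\|B\|\le\epsilon$. The main obstacle is precisely this uniformity: I must verify that the first-order singular-value perturbation controlling the tail $\sigma_{p+1}(C),\sigma_{p+2}(C),\dots$ has an $O(\epsilon^{2})$ remainder that does not depend on $\widehat B:=B/\|B\|$, and that the optimizing direction does not drift as $\epsilon$ grows. This needs a spectral-gap estimate separating the $p$ large singular values of $C_0$ from the $O(\epsilon)$ tail, which the nondegeneracy $\sigma_p(C_0)=O(1)$ supplies; with that gap in hand, the subspace tilt and the replacement $\tilde D\to D$ are both $O(\epsilon)$ uniformly, and the supremum of the ratio equals $\tfrac{1}{\sigma_p(A)\,\sigma_{M-n-p}(D)}\bigl(1+O(\epsilon)\bigr)$, as stated.
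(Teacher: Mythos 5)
Your proposal is correct in substance but takes a genuinely different route from the paper's. The paper stays nonasymptotic: it inserts the SVD of $C$ into the exact identity $B=A(L-C)\tilde{D}$, splits $AQ=(A_{1}\ A_{2})$, and plays the two inequalities $\|A_{1}\|\sigma_{p}(C)\leq\|A(L-C)\|\|\tilde{G}L\|$ and $\|A(L-C)\|\geq(\sigma_{n}(A)-\|A_{1}\|)\sigma_{p+1}(C)$ against each other to obtain the exact finite-$\epsilon$ lower bound $\|B\|\geq\sigma_{n}(A)\sigma_{p+1}(C)\sigma_{M-n-p}(\tilde{D})/(1+\sigma_{p+1}(C)\|\tilde{G}L\|/\sigma_{p}(C))$; then, because its extremal $B$ is defined self-referentially through $\tilde{D}$ and $T$ (which depend on $B$), it invokes the Brouwer fixed point theorem to force exact attainment, yielding the identity \eqref{estimate} before relaxing to $O(\epsilon)$. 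You instead linearize: solve $A(C-L)\tilde{D}=-B$ with Moore--Penrose inverses, justify $C-L=-A^{+}BD^{+}+\mathrm{remainder}$ by a Wedin-type singular-subspace perturbation argument, and evaluate the norm of the two-sided multiplication $B\mapsto A^{+}BD^{+}$, which factors as $\|A^{+}\|\,\|D^{+}\|$ and is attained at a rank-one $B$. This buys a cleaner conceptual picture and legitimately avoids Brouwer: since the theorem only asserts equality up to $O(\epsilon)$, a \emph{fixed} rank-one $B$ aligned with the zeroth-order extreme singular directions attains the constant to the required accuracy, so no self-consistent fixed point is needed. What the paper's detour buys in exchange is the exact equality \eqref{estimate}, valid beyond the asymptotic regime, which your argument does not reproduce.

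Two points in your sketch deserve tightening. First, the uniformity obstacle you flag is real but resolves more simply than you suggest: remainders of size $O(\epsilon^{2})$ are not good enough when dividing by $\|B\|$ (which may be far smaller than $\epsilon$), but all your remainders are actually $O(\|B\|^{2})$, because $\|C-C_{0}\|\leq\|K^{-1}-K_{0}^{-1}\|=O(\|B\|)$ forces $\sigma_{p+1}(C)=\|C-L\|\leq\|C-C_{0}\|=O(\|B\|)$, the singular-subspace tilts are $O(\|B\|/\sigma_{p}(C_{0}))$, and $\tilde{D}-D=O(\|B\|)$; making this explicit gives relative error $O(\epsilon)$ uniformly in $\hat{B}$ at once. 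Moreover the spectral gap you require is automatic, not an extra hypothesis: applying Theorem \ref{nullity} to $K_{0}$ gives $\mathrm{nullity}(C_{0})=M-n-p$, i.e. $\mathrm{rank}\,C_{0}=p$ \emph{exactly}, so $\sigma_{p}(C_{0})>0$ is guaranteed. Second, an indexing remark: your own computation, like the paper's proof, yields $\|A^{+}\|=1/\sigma_{n}(A)$, the reciprocal of the smallest singular value of the full-row-rank $n\times(n+p)$ block $A$; the $\sigma_{p}(A)$ appearing in the theorem statement, which you echo in your third paragraph, appears to be a typo for $\sigma_{n}(A)$ --- the final line of the paper's proof indeed reads $\sigma_{n}(A)$.
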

\begin{proof}
We first show that
\[\frac{\sigma_{n}(A)\sigma_{M-n-p}(\tilde{D})\sigma_{p+1}(C)}{1+\sigma_{p+1}(C)\|\tilde{G}L\|/\sigma_{p}(C)}\leq\|B\|\]. The best
approximation $L$ is given by the SVD as follows: if
\begin{equation}
C=Q\left(
     \begin{array}{ccc}
       \sigma_{1}(C) &  & 0 \\
        & \ddots &  \\
       0 &  & \sigma_{k}(C) \\
     \end{array}
   \right)
P^{-1},
\end{equation}
where $Q$ and $P$ are orthogonal matrices and $k=\min\{n+p,M-n\}$,
then
\begin{equation}
L=Q\left(
     \begin{array}{cccc}
       \sigma_{1}(C) &  & 0 &  \\
        & \ddots &  &  \\
       0 &  & \sigma_{p}(C) &  \\
        &  &  & 0 \\
     \end{array}
   \right)
P^{-1}
\end{equation}
By the lemma, we have
\begin{equation}
B=A(L-C)\tilde{D}=AQ\left(
                      \begin{array}{cccc}
                        0 &  &  &  \\
                         & -\sigma_{p+1}(C) &  & 0 \\
                         &  & \ddots &  \\
                         & 0 &  & -\sigma_{k}(C) \\
                      \end{array}
                    \right)
P^{-1}\tilde{D}
\end{equation}
Since $L\tilde{D}=0$ by construction, $P^{-1}\tilde{D}$ looks like
$\left(
                                                             \begin{array}{c}
                                                               0 \\
                                                               D_{0} \\
                                                             \end{array}
                                                           \right)$,
where $D_{0}$ is an $(M-n-p)\times (M-n-p)$ matrix and
$\sigma_{M-n-p}(D_{0})=\sigma_{M-n-p}(\tilde{D})$. Let $AQ=\left(
                                                     \begin{array}{cc}
                                                       A_{1} & A_{2} \\
                                                     \end{array}
                                                   \right)$, where
$A_{1}$ has $p$ columns and $A_{2}$ is an $n$ by $n$ matrix. By the lemma,
$AL=A(L-C)\tilde{G}L+\tilde{A}L=A(L-C)\tilde{G}L$. We notice that
\[AL=AQ\left(
         \begin{array}{cccc}
           \sigma_{1}(C) &  & 0 &  \\
            & \ddots &  &  \\
           0 &  & \sigma_{p}(C) &  \\
            &  &  & 0 \\
         \end{array}
       \right)=A_{1}\left(
                      \begin{array}{cccc}
                        \sigma_{1}(C) &  & 0 &  \\
                         & \ddots &  & 0 \\
                        0 &  & \sigma_{p}(C) &  \\
                      \end{array}
                    \right).\]
Hence,
\begin{equation}\label{ineq1}
\|A_{1}\|\sigma_{p}(C)\leq\|AL\|\leq\|A(L-C)\|\|\tilde{G}L\|.
\end{equation}
On the other hand,
\[A(L-C)=A_{2}\left(
                \begin{array}{cccc}
                  -\sigma_{p+1}(C) &  & 0 &  \\
                   & \ddots &  &  \\
                  0 &  & \sigma_{k}(C) &  \\
                   &  &  & 0 \\
                \end{array}
              \right).
\]
Therefore,
\begin{equation}\label{ineq2}
\|A(L-C)\|\geq\sigma_{n}(A_{2})\sigma_{p+1}(C)\geq(\sigma_{n}(A)-\|A_{1}\|)\sigma_{p+1}(C).
\end{equation}
We combine \eqref{ineq1} and \eqref{ineq2} to obtain
\[\sigma_{n}(A)-\frac{\|A(L-C)\|}{\sigma_{p+1}(C)}\leq\|A_{1}\|\leq\frac{\|A(L-C)\|\|\tilde{G}L\|}{\sigma_{p}(C)}.\]
Hence,
\[\|A(L-C)\|\geq\frac{\sigma_{n}(A)\sigma_{p+1}(C)}{1+\sigma_{p+1}(C)\|\tilde{G}L\|/\sigma_{p}(C)}.\]
Now we put all things together:
\begin{eqnarray*}
\|B\|&=&\|A(L-C)\tilde{D}\|\\
&\geq&\|A(L-C)\|\sigma_{M-n-p}(D_{0})\\
&\geq&\frac{\sigma_{n}(A)\sigma_{p+1}(C)\sigma_{M-n-p}(\tilde{D})}{1+\sigma_{p+1}(C)\|\tilde{G}L\|/\sigma_{p}(C)}
\end{eqnarray*}
Now we want to find a $B$ such that the equality holds. We have the
following SVDs:
\begin{eqnarray*}
A&=&Q_{1}\left(
           \begin{array}{cccc}
             \sigma_{1}(A) &  &  &  \\
              & \ddots &  & 0 \\
              & & \sigma_{n}(A) &  \\
           \end{array}
         \right)P_{1}^{-1}\\
\tilde{D}&=&Q_{2}\left(
                   \begin{array}{ccc}
                     \sigma_{1}(\tilde{D}) &  &  \\
                      & \ddots &  \\
                      &  & \sigma_{M-n-p}(\tilde{D}) \\
                      & 0 &  \\
                   \end{array}
                 \right)P_{2}^{-1}
\end{eqnarray*}
Let $T=(t_{ij})$ be an $(n+p)\times(M-n)$ matrix, where
\[t_{ij}=\begin{cases}\frac{\sigma_{p+1}(C)}{1+\sigma_{p+1}(C)\|\tilde{G}L\|/\sigma_{p}(C)}, &\text{if $i=n$ and
$j=M-n-p$;}\\
0, &\text{otherwise}.
\end{cases}\]
Then $Q_{1}^{-1}(AP_{1}TQ_{2}\tilde{D})P_{2}$ has only one nonzero
entry,
$\frac{\sigma_{n}(A)\sigma_{p+1}(C)\sigma_{M-n-p}(\tilde{D})}{1+\sigma_{p+1}(C)\|\tilde{G}L\|/\sigma_{p}(C)}$.
Note that $\tilde{D}$ and $T$ are functions of $B$. Let
$\mathcal{B}=\{B\in\mathbb{R}^{n\times(M-n-p)}|\|B\|\leq\epsilon\}$.
Consider the map
\begin{eqnarray*}
f: \mathcal{B}&\to&\mathbb{R}^{n\times(M-n-p)}\\
B&\mapsto &AP_{1}TQ_{2}^{-1}\tilde{D}
\end{eqnarray*}
By the first half of the proof,
\[\|AP_{1}TQ_{2}^{-1}\tilde{D}\|=\frac{\sigma_{n}(A)\sigma_{p+1}(C)\sigma_{M-n-p}(\tilde{D})}{1+\sigma_{p+1}(C)\|\tilde{G}L\|/\sigma_{p}(C)}\leq\|B\|\leq\epsilon\]
Hence, $f(\mathcal{B})\subset\mathcal{B}$. Now we apply Brouwer
fixed point theorem to conclude that there exists
$B_{0}\in\mathcal{B}$ such that
$B_{0}=f(B_{0})=AP_{1}TQ_{2}^{-1}\tilde{D}$. Therefore,
$\|B_{0}\|=\frac{\sigma_{n}(A)\sigma_{p+1}(C)\sigma_{M-n-p}(\tilde{D})}{1+\sigma_{p+1}(C)\|\tilde{G}L\|/\sigma_{p}(C)}.$
So far, we have shown that
\begin{equation}\label{estimate}
\sup_{\|B\|\leq\epsilon}\frac{\sigma_{n}(A)\sigma_{p+1}(C)\sigma_{M-n-p}(\tilde{D})}{\|B\|(1+\sigma_{p+1}(C)\|\tilde{G}L\|/\sigma_{p}(C))}=1
\end{equation}
Now we notice that the difference between quantities with tilde and
those without tilde are of the order $O(\epsilon)$, and
$\sigma_{p+1}(C)$ is also of order $O(\epsilon)$. Therefore,
\[\sup_{\|B\|\leq\epsilon}\frac{\sigma_{p+1}(C)}{\|B\|}=\frac{1}{\sigma_{n}(A)\sigma_{M-n-p}(D)}+O(\epsilon)\]
\end{proof}
This theorem gives the least upper bound of the error in terms of
the smallest singular values of $A$ and $D$, up to the first order
of $\epsilon$. Eq \eqref{estimate} even works for large $\epsilon$,
though its complex form makes it hardly useful.

\section{Extension to Infinite Dimensions}
The continuous counterpart of the singular value estimate may be formulated in the following way. Let $H_{0}$ be an invertible operator acting on functions of single variable, whose Green's function has finite off-diagonal rank. To be specific, the space of functions is taken to be $L^{2}([0,1])$. The symbol $H_{0}$ does not suggest that it be Hermitian(or self-adjoint). Now suppose a perturbation $\epsilon W$ is turned on, where $\|W\|=1$. In a typical problem, $H_{0}$ is a local operator such as the Schr\"{o}dinger operator $-\frac{d^{2}}{dx^{2}}+V(x)$ and $W$ is an integral operator. The equation satisfied by the Green's function is

\begin{equation}
H_{0}G(x,x')+\epsilon\int_{0}^{1} W(x,x'')G(x'',x')dx''=\delta(x-x')
\label{coordGreen}
\end{equation}
We can rewrite the above equation in terms of Dirac notation:
\begin{equation}
H_{0}\ket{\psi}+\epsilon W\ket{\psi}=\ket{x'}
\label{DiracGreen}
\end{equation}
Here, $\ket{\psi}$ is some function and $\braket{x}{\psi}=\psi(x)$. In particular, $\braket{x}{x'}=\delta(x-x')$.

Before proceeding to estimate the error in the Green's function, we make some assumptions on the perturbation $\epsilon W$. First, we require that $\epsilon<\|H_{0}^{-1}\|$ in order to make the operator $H_{0}+\epsilon W$ invertible. 
In the following derivation, we need the inverse of $H_{0}$ to have finite off-diagonal rank. To understand this condition better, we write $H_{0}$ and $H_{0}^{-1}$ in the block form:
\[H_{0}=\left( \begin{array}{cc}
                  A & 0 \\
                  G & D \end{array}\right)\text{ and }
H_{0}^{-1}=\left( \begin{array}{cc}
                  E & C\\
                  F & H\end{array}\right),\]
where $A: L^{2}([0,x_{0}])\to L^{2}([0,x_{1}])$ is the restriction of $H_{0}$ and $0<x_{1}<x_{0}<1$. We are concerned with the Green's function $G(x,x')$ where $x\leq x_{0}$ and $x'\geq x_{1}$. We observe that $AC=0$. Hence, $\mathrm{Im}C\subset\mathrm{Ker}A$. Now we assume that $A$ is a Fredholm operator, or more generally, a bounded operator with finite-dimensional kernal and closed image. Then $C$ has finite rank. Moreover, since the index of a Fredholm operator is invariant under sufficiently small perturbations, we can absorb the diagonal part of $W$ into $H_{0}$ without increasing the rank of $C$. Without loss of generality, we can assume that $\bra{x'}W\ket{x''}\neq 0$ only if $x'\leq x_{1}<x_{0}\leq x''$, or $x''\leq x_{0}$ and $x'\geq x_{1}$. This is consistent with the assumption we have made in the discrete case.

The standard perturbation theory yields that
\begin{eqnarray*}
G(x,x')&=&\braket{x}{\psi}\\
&=&\bra{x}\frac{1}{H_{0}+\epsilon W}\ket{x'}\\
&=&\bra{x}(H_{0}^{-1}-\epsilon H_{0}^{-1}WH_{0}^{-1}+o(\epsilon^{2})\ket{x'}\\
&=&G_{0}(x,x')-\epsilon\int_{[0,1]\times[0,1]} G_{0}(x,x'')\bra{x''}W\ket{x'''}G_{0}(x''',x')dx''dx'''+o(\epsilon^{2}),
\end{eqnarray*}
where $G_{0}$ is the Green's function of the unperturbed operator $H_{0}$. We will denote $\bra{x''}W\ket{x'''}$ by $W(x'',x''')$ from now on.

By the assumption on $W$, $W(x'',x''')\neq0$ only if $x''\leq x_{1}<x_{0}\leq x'''$, or $x'''\leq x_{0}$ and $x''\geq x_{1}$. Hence, the integral can be broken into two parts:
\begin{eqnarray*}
&&\int_{[0,1]\times[0,1]}G_{0}(x,x'')W(x'',x''')G_{0}(x''',x')dx''dx'''\\
&=&\int_{0}^{x_{1}}dx''G_{0}(x,x'')\int_{x_{0}}^{1}dx'''W(x'',x''')G_{0}(x''',x')+\\
&&\int_{x_{1}}^{1}dx''G_{0}(x,x'')\int_{0}^{x_{0}}dx'''W(x'',x''')G_{0}(x''',x')
\end{eqnarray*}
The second term on the right hand side does not increase the rank since $x\leq x_{0}$ and $x''\geq x_{1}$. Furthermore, $\|W\|=1$, so the $L^{2}$ norm of the first term is bounded by $\frac{1}{\sigma_{1}\sigma_{2}}$, where 
\begin{eqnarray}
\sigma_{1}&=&(\int_{0}^{x_{0}}dy\int_{0}^{x_{1}}dy'|G_{0}(y,y')|^{2})^{-\frac{1}{2}};\\
\sigma_{2}&=&(\int_{x_{0}}^{1}dy\int_{x_{1}}^{1}dy'|G_{0}(y,y')|^{2})^{-\frac{1}{2}}.
\label{sigmas}
\end{eqnarray}
They are the continuous counterparts of $\sigma_{k}(A)$ and $\sigma_{n-k+p}(D)$ in the discrete case. Combining all the above equations, we conclude that if the off-diagonal rank of $H_{0}$ is $n$, then the $(n+1)th$ singular value of the off-diagonal block of $H_{0}+\epsilon W$ is bounded by $\frac{\epsilon}{\sigma_{1}\sigma_{2}}+o(\epsilon^{2})$.

\section{Open Question}
The Nullity Theorem also applies to the case where the unperturbed matrix has a low
off-diagonal rank, but not a banded matrix. We have assumed
throughout that $B$ in Theorem \eqref{nullity} equals zero. However,
if $\mathrm{rank}B=k\neq0$, we still have a low rank off-diagonal
block: $\mathrm{rank}C=p+k$, by the Nullity Theorem. However, it is
considerably harder to generalize this result to an approximate
case. In particular, there is no known estimate for
$\sigma_{p+k+1}(C)/\sigma_{k+1}(B)$. Experiments indicate that the
dependence of this ratio on $A$ and $D$ is very complicated and it
may also depend on the first $k$ singular values of $B$. Therefore,
our result is not a full generalization of the Nullity Theorem.
\section{Conclusion}
In this paper, we have studied the nearly-banded matrices via the
off-diagonal blocks of their inverses and obtained an estimate
(Corollary \eqref{estimate}) for the $(p+1)$th singular value of an
off-diagonal block, which in some sense measures the performance of
the banded approximation. The result is then extended to a particular example of function spaces. This research is sponsored by the Lord Fund in MIT. The research is initiated by Gilbert Strang, and the extension to the continuous case is inspired by Steven Johnson, to whom the author is deeply grateful.

\end{document}